\def\beq{\begin{equation}}
\def\eeq{\end{equation}}
\theoremstyle{definition}
\newtheorem{observation}{Observation}
\theoremstyle{plain}
\newtheorem{theorem}{Theorem}
\newtheorem{claim}{Claim}
\newtheorem{lemma}{Lemma}
\newtheorem{proposition}{Proposition}
\numberwithin{equation}{section}
\numberwithin{proposition}{section}
\numberwithin{observation}{section}
\numberwithin{definition}{section}
\numberwithin{theorem}{section}
\numberwithin{problem}{section}
\numberwithin{example}{section}
\numberwithin{claim}{section}
\numberwithin{fact}{section}
\numberwithin{lemma}{section}
\numberwithin{conjecture}{section}
\numberwithin{corollary}{section}
\begin{document}

\title{On the maximum number of distinct intersections in an intersecting family}

\author{
Peter Frankl\footnote{R\'enyi Institute, Budapest, Hungary and Moscow Institute of Physics and Technology, Russia, Email: {\tt peter.frankl@gmail.com}},
Sergei Kiselev\footnote{Moscow Institute of Physics and Technology, Email: {\tt kiselev.sg@gmail.com}}, 
Andrey Kupavskii\footnote{G-SCOP, CNRS, University Grenoble-Alpes, France and Moscow Institute of Physics and Technology, Russia; Email: {\tt kupavskii@yandex.ru}.  The authors acknowledge the financial support from the Russian Government in the framework of MegaGrant no 075-15-2019-1926.}}

\date{}
\maketitle

\begin{abstract}
For $n > 2k \geq 4$ we consider intersecting families $\mathcal F$ consisting of $k$-subsets of $\{1, 2, \ldots, n\}$.
Let $\mathcal I(\mathcal F)$ denote the family of all distinct intersections $F \cap F'$, $F \neq F'$ and $F, F'\in \mathcal F$.
Let $\mathcal A$ consist of the $k$-sets $A$ satisfying $|A \cap \{1, 2, 3\}| \geq 2$.
We prove that for $n \geq 50 k^2$ \,$|\mathcal I(\mathcal F)|$ is maximized by $\mathcal A$.
\end{abstract}

\section{Introduction}
\label{sec:1}

Let $n, k$ be positive integers, $n > 2k$.
Let $X = \{1,2,\ldots, n\}$ be the standard $n$-element set and let ${X\choose k}$ be the collection of all its $k$-subsets.
For a family $\mathcal F \subset {X\choose k}$ let $\mathcal I(\mathcal F) := \{F \cap F'\colon F, F' \in \mathcal F, F \neq F'\}$ be the family of all distinct pairwise intersections.
Recall that a family $\mathcal F$ is called \emph{intersecting} if $F \cap F'\neq \emptyset$ for all $F, F'\in \mathcal F$.

One of the cornerstones of extremal set theory is the Erd\H{o}s--Ko--Rado Theorem:

\begin{theorem}[\cite{EKR}]
\label{th:1.1}
Suppose that $\mathcal F \subset {X\choose k}$ is intersecting.
Then
\beq
\label{eq:1.1}
|\mathcal F| \leq {n - 1\choose k - 1}.
\eeq
\end{theorem}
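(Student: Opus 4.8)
The plan is to give Katona's cyclic permutation (circle) proof, which reduces the whole statement to one short combinatorial claim plus a double count. First I would fix a cyclic ordering $\sigma$ of $X$, i.e.\ a placement of $1, \dots, n$ around a circle; there are $(n-1)!$ of these. Call a $k$-set an \emph{arc} of $\sigma$ if its elements occupy $k$ consecutive positions on the circle, so every $\sigma$ has exactly $n$ arcs. The heart of the argument is the claim that for each fixed $\sigma$ the intersecting family $\mathcal F$ contains at most $k$ arcs of $\sigma$.

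To prove the claim, suppose $\mathcal F$ contains an arc $A$ of $\sigma$, say $A$ occupies the positions $1, \dots, k$. Any arc $B \neq A$ that meets $A$ must begin at one of the positions $2, \dots, k$ or end at one of the positions $1, \dots, k-1$. I would pair, for each $i \in \{2, \dots, k\}$, the arc beginning at position $i$ with the arc ending at position $i-1$; their union consists of $2k$ consecutive positions, so the two members of each pair are \emph{disjoint} exactly because $n > 2k$. Since $\mathcal F$ is intersecting, at most one arc from each of these $k-1$ pairs lies in $\mathcal F$, hence $\mathcal F$ contains at most $1 + (k-1) = k$ arcs of $\sigma$.

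Next I would double count the pairs $(\sigma, F)$ with $F \in \mathcal F$ an arc of $\sigma$. Summing the claim over all $\sigma$ gives at most $k \cdot (n-1)!$ such pairs. On the other hand, for a fixed $F \in \mathcal F$ the number of cyclic orderings in which $F$ is an arc is $k!\,(n-k)!$: arrange $F$ as a single block together with the other $n-k$ elements around the circle, then order $F$ internally. Comparing the two counts yields $|\mathcal F|\cdot k!\,(n-k)! \le k\,(n-1)!$, which rearranges to $|\mathcal F| \le \frac{(n-1)!}{(k-1)!\,(n-k)!} = \binom{n-1}{k-1}$, proving \eqref{eq:1.1}.

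The only genuine obstacle is the arc-counting claim, and within it the single delicate point is the disjointness of each paired pair of arcs — this is precisely where the hypothesis $n > 2k$ is used, and the bound indeed fails for $n = 2k$ (take all arcs containing a fixed position), so the hypothesis cannot be dropped. Everything else is routine bookkeeping. An alternative would be the original Erd\H{o}s--Ko--Rado route via shifting/compression and induction on $n$, but the circle method is shorter and entirely self-contained, so that is the route I would take.
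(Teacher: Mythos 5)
Your proof is correct. The paper does not actually prove Theorem \ref{th:1.1}; it is quoted from \cite{EKR}, so there is no internal argument to compare against. What you give is Katona's cycle (circular permutation) proof, a genuinely different and much shorter route than the original Erd\H{o}s--Ko--Rado shifting-and-induction argument: your pairing of the arc beginning at position $i$ with the arc ending at position $i-1$ does account for all $2(k-1)$ arcs other than $A$ that can meet $A$, the disjointness of each pair is right, and the double count $|\mathcal F|\cdot k!\,(n-k)! \le k\,(n-1)!$ rearranges to $\binom{n-1}{k-1}$ as claimed. One side remark in your last paragraph is inaccurate, though it does not affect the proof under the standing hypothesis $n>2k$: the bound \eqref{eq:1.1} does \emph{not} fail at $n=2k$. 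There the two arcs in each of your pairs are complementary, hence still disjoint, so the ``at most $k$ arcs per cyclic order'' claim and the inequality both persist (your example of the $k$ arcs through a fixed position attains $k$, it does not exceed it); what breaks down at $n=2k$ is only the uniqueness of the extremal families, which is the Hilton--Milner point the paper mentions right after the theorem, whereas the statement genuinely fails only for $n<2k$, where all $k$-sets pairwise intersect.
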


For a fixed element $x\! \in\! X$ define the \emph{full star} $\mathcal S_x$ by $\mathcal S_x\! :=\! \left\{S\! \in\! {X \choose k}\colon x\! \in\! S\right\}$.
Clearly $\mathcal S_x$ is intersecting and it provides equality in \eqref{eq:1.1}.
Subfamilies of $\mathcal S_x$ are called \emph{stars}.
If we permit $n = 2k$ then there are many other intersecting families attaining equality in \eqref{eq:1.1}.
However, Hilton and Milner \cite{HM} proved that for $n > 2k$ the full stars are the only intersecting families with this property.

For a family $\mathcal G \subset 2^X$ define the family of transversals:
$$
\mathcal T(\mathcal G) := \bigl\{ T \subset X\colon |T| \leq k, \,T \cap G \neq \emptyset \ \text{ for all } \ G \in \mathcal G\bigr\}.
$$

With this definition $\mathcal G \subset {X \choose k}$ is intersecting iff $\mathcal G \subset \mathcal T(\mathcal G)$.
For $\mathcal G \subset 2^X$ and $0 \leq \ell \leq n$ define the $\ell$-th level of $\mathcal G$ by $\mathcal G^{(\ell)} := \{G \in \mathcal G, |G| = \ell\}$.

An intersecting family $\mathcal F \subset {X \choose k}$ is called \emph{saturated} if $\mathcal F \cup \{G\}$ ceases to be intersecting for all $G \in {X\choose k} \setminus \mathcal F$.

\setcounter{observation}{1}
\begin{observation}
\label{obs:1.2}
An intersecting family $\mathcal F \subset {X \choose k}$ is \emph{saturated} iff $\mathcal F = \mathcal T(\mathcal F)^{(k)}$.
\end{observation}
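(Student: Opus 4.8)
The statement is a direct unwinding of the definitions, and the plan is to prove it by establishing the two inclusions $\mathcal F \subseteq \mathcal T(\mathcal F)^{(k)}$ (which holds for every intersecting family) and $\mathcal T(\mathcal F)^{(k)} \subseteq \mathcal F$ (which turns out to be exactly the saturation condition).

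First I would observe that, since $\mathcal F$ is intersecting, every $F \in \mathcal F$ satisfies $F \cap F' \neq \emptyset$ for all $F' \in \mathcal F$; as $|F| = k \leq k$, this says precisely that $F \in \mathcal T(\mathcal F)^{(k)}$. Hence $\mathcal F \subseteq \mathcal T(\mathcal F)^{(k)}$ with no hypothesis beyond intersecting-ness, and the claimed equality reduces to the reverse inclusion $\mathcal T(\mathcal F)^{(k)} \subseteq \mathcal F$.

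Next I would unwind the definition of saturation. A $k$-set $G \in {X\choose k}$ lies in $\mathcal T(\mathcal F)^{(k)}$ exactly when $G \cap F \neq \emptyset$ for all $F \in \mathcal F$; since $G$ itself is nonempty (as $k \geq 1$), the only potentially offending pairs in $\mathcal F \cup \{G\}$ are of the form $\{G, F\}$ with $F \in \mathcal F$, so this condition is literally the statement that $\mathcal F \cup \{G\}$ is again intersecting. Therefore $\mathcal F$ fails to be saturated iff there exists $G \in {X\choose k} \setminus \mathcal F$ with $\mathcal F \cup \{G\}$ intersecting, i.e. with $G \in \mathcal T(\mathcal F)^{(k)} \setminus \mathcal F$; equivalently, $\mathcal F$ is saturated iff $\mathcal T(\mathcal F)^{(k)} \subseteq \mathcal F$. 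Combining this with the first step yields $\mathcal F = \mathcal T(\mathcal F)^{(k)}$, and conversely if $\mathcal F = \mathcal T(\mathcal F)^{(k)}$ then the inclusion $\mathcal T(\mathcal F)^{(k)} \subseteq \mathcal F$ holds trivially, so $\mathcal F$ is saturated.

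There is no genuine obstacle here — the proof is purely a translation between "adding a new $k$-set keeps the family intersecting" and "that $k$-set is a transversal". The only point that deserves a half-sentence of care is the remark above that passing from $\mathcal F$ to $\mathcal F \cup \{G\}$ can only create intersecting-violations involving $G$, so "intersecting after adjoining $G$" is exactly the transversal condition on $G$.
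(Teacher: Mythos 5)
Your proof is correct and is exactly the definitional unwinding the paper has in mind (the paper states Observation~\ref{obs:1.2} without proof, treating it as immediate, after noting that $\mathcal G\subset\binom{X}{k}$ is intersecting iff $\mathcal G\subset\mathcal T(\mathcal G)$). Your two inclusions — $\mathcal F\subseteq\mathcal T(\mathcal F)^{(k)}$ from intersecting-ness, and $\mathcal T(\mathcal F)^{(k)}\subseteq\mathcal F$ being precisely saturation — spell out that remark cleanly and completely.
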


The aim of the present paper is to investigate the maximum size of $\mathcal I(\mathcal F)$ over intersecting families $\mathcal F \subset {X \choose k}$.
Since $\mathcal F \subset \widetilde{\mathcal F}$ implies $\mathcal I(\mathcal F) \subset \mathcal I(\widetilde{\mathcal F})$, in the process we may assume that $\mathcal F$ is saturated.

Unless otherwise stated, all considered intersecting families are supposed to be saturated.
We need the following lemma that was essentially proved in \cite{F78}.
In order to state it, recall that a family $\mathcal B$ is called an \emph{antichain} if $B \not\subset B'$ holds for all distinct members $B, B' \in \mathcal B$.
Recall also that an antichain $\{A_1, \ldots, A_p\}$ is called a \emph{sunflower} of size $p$ with center $C$ if
$$
A_i \cap A_j = C \ \text{ for all } \ 1 \leq i < j \leq p.
$$

\setcounter{lemma}{2}
\begin{lemma}
\label{lem:1.3}
Suppose that $\mathcal F \subset {X \choose k}$ is a saturated intersecting family.
Let $\mathcal B = \mathcal B(\mathcal F)$ be the family of minimal (w.r.t. containment) sets in $\mathcal T(\mathcal F)$.
Then
\begin{itemize}
\itemsep=-1pt
\item[{\rm (i)}] $\mathcal B$ is an intersecting antichain,
\item[{\rm (ii)}] $\mathcal F = \left\{H \in {X\choose k}\colon \exists B \in \mathcal B, B \subset H\right\}$,
\item[{\rm (iii)}] $\mathcal B$ contains no sunflower of size $k + 1$.
\end{itemize}
\end{lemma}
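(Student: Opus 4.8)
The plan is to read everything off from Observation~\ref{obs:1.2}, which says that saturation of $\mathcal F$ is equivalent to $\mathcal F = \mathcal T(\mathcal F)^{(k)}$, together with two elementary facts about transversals. First, $\mathcal T(\mathcal F)$ is \emph{upward-extendable up to level $k$}: if $T \in \mathcal T(\mathcal F)$ and $T \subseteq H \in {X\choose k}$, then $H \cap G \supseteq T \cap G \neq \emptyset$ for every $G \in \mathcal F$, so $H \in \mathcal T(\mathcal F)^{(k)} = \mathcal F$. Second, $\mathcal T(\mathcal F)$ need not be downward closed, but every $T \in \mathcal T(\mathcal F)$ still contains a minimal transversal: choose a $\subseteq$-minimal element $B$ of the finite nonempty family $\{T' \subseteq T : T' \in \mathcal T(\mathcal F)\}$; then $B \in \mathcal B$. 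Note also that $\mathcal F \neq \emptyset$ (else $\mathcal T(\mathcal F)^{(k)} = {X\choose k} \neq \mathcal F$), hence $\mathcal B \neq \emptyset$.

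Part~(ii) is then immediate in both directions. If $B \in \mathcal B$ and $B \subseteq H \in {X\choose k}$, the extendability fact gives $H \in \mathcal F$; conversely, any $H \in \mathcal F \subseteq \mathcal T(\mathcal F)$ contains some $B \in \mathcal B$ by the second fact. For~(i), $\mathcal B$ is an antichain by construction, being the set of $\subseteq$-minimal members of $\mathcal T(\mathcal F)$. To see it is intersecting, suppose $B, B' \in \mathcal B$ with $B \cap B' = \emptyset$. Since $|B| + |B'| \leq 2k < n$, I can pick disjoint $H, H' \in {X\choose k}$ with $B \subseteq H$ and $B' \subseteq H'$; by extendability $H, H' \in \mathcal F$, contradicting that $\mathcal F$ is intersecting.

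For~(iii), suppose $A_1, \dots, A_{k+1} \in \mathcal B$ form a sunflower with center $C$, and set $Y_i := A_i \setminus C$. The key step is to show $C \notin \mathcal T(\mathcal F)$: we have $C \subseteq A_1$ with $|C| \leq |A_1| \leq k$, and $C \neq A_1$ (otherwise $A_1 = C \subseteq A_2$, contradicting that $\mathcal B$ is an antichain), so if $C$ met every member of $\mathcal F$ then $C$ would be a transversal strictly inside the minimal transversal $A_1$, which is impossible. Hence some $F_0 \in \mathcal F$ satisfies $F_0 \cap C = \emptyset$. Since each $A_i \in \mathcal T(\mathcal F)$ meets $F_0$, we get $\emptyset \neq F_0 \cap A_i = F_0 \cap Y_i$, so $F_0$ contains an element of each of the $k+1$ pairwise disjoint petals $Y_1, \dots, Y_{k+1}$; thus $|F_0| \geq k+1$, contradicting $|F_0| = k$.

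Every step is elementary, so I do not expect a genuine obstacle; the only point that needs care is that $\mathcal T(\mathcal F)$ is not closed under taking subsets, which is why in~(ii) and~(iii) one must pass explicitly to a minimal transversal contained in a given set rather than simply delete elements.
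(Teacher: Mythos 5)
Your proof is correct and follows essentially the same route as the paper's. The two small surface differences: for (i) you construct two disjoint $k$-sets $H\supseteq B$, $H'\supseteq B'$ rather than (as the paper does) splitting off the case $|B|=|B'|=k$ and then extending only $B'$ to a $k$-set avoiding $B$ --- your version is a touch more symmetric but the same idea; and for (iii) you phrase the final step as ``$F_0$ meets all $k+1$ disjoint petals, so $|F_0|\ge k+1$'' rather than the paper's ``some petal is missed by $F_0$'' --- again the same pigeonhole, just in contrapositive. Your explicit record of the two auxiliary facts (upward extension to level $k$ via Observation~\ref{obs:1.2}, and existence of a minimal transversal below any transversal) makes the ``immediate from the definition'' claim for (ii) transparent, which is a reasonable expansion rather than a different argument.
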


The proof is given in the next section.

Define the intersecting family $\mathcal A = \mathcal A(n, k)$ on the ground set $X = \{1, \ldots, n\}$ by
$$
\mathcal A := \left\{A \in {X\choose k}\colon |A \cap \{1,2,3\}| \geq 2\right\}.
$$

The main result of the present paper is

\setcounter{theorem}{3}
\begin{theorem}
\label{th:1.4}
Suppose that $n \geq 50 k^2$, $k \geq 2$ and $\mathcal F \subset {X \choose k}$ is intersecting.
Then
\beq
\label{eq:1.2}
|\mathcal I(\mathcal F)| \leq |\mathcal I(\mathcal A)|.
\eeq
\end{theorem}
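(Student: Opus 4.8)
The plan is to reduce to a saturated family, read off the structure of $\mathcal B=\mathcal B(\mathcal F)$ from Lemma~\ref{lem:1.3}, and run a case analysis on $\tau(\mathcal F)$. First I would nail down the right‑hand side of \eqref{eq:1.2}. Any two members of $\mathcal A$ meet inside $\{1,2,3\}$, and conversely, given $I$ with $1\le|I|\le k-1$ and $I\cap\{1,2,3\}\ne\emptyset$ one realises $I=A\cap A'$ by extending $I$ with the points of $\{1,2,3\}$ it misses and then filling up disjointly (this needs $n>2k$); hence $\mathcal I(\mathcal A)=\{I\subseteq X:1\le|I|\le k-1,\ I\cap\{1,2,3\}\ne\emptyset\}$ and $|\mathcal I(\mathcal A)|=\sum_{j=1}^{k-1}\bigl(\binom nj-\binom{n-3}j\bigr)$, with dominant (level $j=k-1$) term $\binom n{k-1}-\binom{n-3}{k-1}\sim 3\binom{n-1}{k-2}$. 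I would also record that $\mathcal B(\mathcal A)=\{\{1,2\},\{1,3\},\{2,3\}\}$.

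Next, the structural reduction. If $F_1\cap F_2=I$ with $F_1\ne F_2$ and $B_i\subseteq F_i$, $B_i\in\mathcal B$, then either $B_1=B_2\subseteq I$, or $B_1\ne B_2$ and $\emptyset\ne B_1\cap B_2\subseteq I$; so $\mathcal I(\mathcal F)\subseteq\{I:|I|\le k-1,\ D\subseteq I\ \text{for some}\ D\in\mathcal B\ \text{or}\ D=B\cap B',\,B\ne B'\in\mathcal B\}$. For the triangle this set is exactly $\mathcal I(\mathcal A)$, but in general the inclusion is wasteful, so I would also use the sharp top‑level description obtained by counting one‑point extensions: a $(k-1)$-set $I$ lies in $\mathcal I(\mathcal F)$ iff some $B\in\mathcal B$ has $B\subseteq I$, or there are $B\ne B'\in\mathcal B$ and $a\in B\setminus B'$, $a'\in B'\setminus B$ with $(B\setminus\{a\})\cup(B'\setminus\{a'\})\subseteq I$. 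Call the sets $(B\setminus\{a\})\cup(B'\setminus\{a'\})$, of size $|B\cup B'|-2$, the \emph{cores}. Then $|\mathcal I(\mathcal F)^{(k-1)}|\le\bigl|\{I\in\binom X{k-1}:B\subseteq I\ \text{some}\ B\in\mathcal B\}\bigr|+\sum_{\text{minimal cores }C}\binom{n-|C|}{k-1-|C|}$, and the lower levels are treated identically.

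Now split on $t:=\tau(\mathcal F)=\min_{B\in\mathcal B}|B|$. If $t=1$, $\mathcal F=\mathcal S_x$ is a full star and $\mathcal I(\mathcal F)=\{I:x\in I,\ |I|\le k-1\}$ has size $\sum_{i=0}^{k-2}\binom{n-1}i\sim\binom{n-1}{k-2}$, far below the target. If $t=2$, the $2$-element members of $\mathcal B$ form an intersecting family of pairs, hence either the triangle on some three points — in which case no further minimal transversal fits, so $\mathcal F=\mathcal A$ up to relabelling and \eqref{eq:1.2} holds with equality — or a "star of pairs" through a point $c$ with at most $k$ leaves (a sunflower, by Lemma~\ref{lem:1.3}(iii)); in that subcase every intersection not containing $c$ comes from a core of size $\ge2$, and one gets $|\mathcal I(\mathcal F)|\lesssim\binom{n-1}{k-2}$, again well below $|\mathcal I(\mathcal A)|$. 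If $t\ge3$, there is no $2$-element cover of $\mathcal B$, every member has size $\ge3$, and every core has size $|B\cup B'|-2\ge t-1\ge2$; thus every minimal core has size $\ge2$ and contributes at most $\binom{n-2}{k-3}=\binom{n-1}{k-2}\cdot\frac{k-2}{n-1}\le\frac1{50k-1}\binom{n-1}{k-2}$, a factor $\gtrsim 50k$ below the target scale.

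I expect the case $t\ge3$ (and the "star of pairs" subcase) to be the real obstacle. The difficulty is that $\mathcal B$ itself can be exponentially large in $k$ — e.g. $\mathcal B=\binom{[2k-1]}k$ has $\tau=k$ — so one cannot bound the number of cores by $|\mathcal B|^2$. The way out is that a core of size exactly $2$ can only arise from two $3$-element members of $\mathcal B$ sharing a $2$-set, in which case the core \emph{is} that $2$-set; cores of size $\ge3$ contribute only $O(k^2/n^2)\binom{n-1}{k-2}$ each and so are harmless even in great numbers, and cores of size close to $k-1$ contribute $O(1)$ each and total at most $2^{O(k)}\ll\binom{n-1}{k-2}$. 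Hence everything reduces to showing that \emph{when $\tau(\mathcal B)\ge3$ (resp.\ in the relevant $t=2$ subcase) the number of "heavy pairs" — $2$-subsets lying in at least two members of the $3$-uniform part $\mathcal B^{(3)}$ — is $O(k)$}; this is the crux. The heuristic is that a large, spread‑out family of heavy pairs would, through the intersecting property together with Lemma~\ref{lem:1.3}(i),(iii), manufacture a transversal of $\mathcal B$ of size $\le2$, contradicting $\tau\ge3$; making this quantitative with constants compatible with $n\ge50k^2$ is the delicate part. Granting it, summing the core contributions over all levels $j\le k-1$ keeps $|\mathcal I(\mathcal F)|$ below $\sim3\binom{n-1}{k-2}$, which establishes \eqref{eq:1.2}.
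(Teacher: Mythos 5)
Your structural reduction is sound and your description of $\mathcal I(\mathcal A)$ is correct, but the argument as written has several real gaps and takes a route quite different from the paper's. The paper does not reason about ``cores'' at all: it first proves a branching-process lemma (Lemma~\ref{lem:2.3}) giving $|\mathcal B^{(\ell)}|\le t\cdot\ell\cdot k^{\ell-2}$, then partitions $\mathcal F$ into levels $\mathcal F^{(\ell)}$ according to the largest $B\in\mathcal B$ inside $F$, and bounds the intersections contributed by level $\ell$ via the elementary factorization $F\cap F'=(B\cap F')\cup((F\setminus B)\cap F')$, giving $|\mathcal I_\ell|\le(2^\ell-1)|\mathcal B^{(\ell)}|\sum_{i\le k-\ell}\binom ni$ and a geometrically decaying series. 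Your plan instead tries to enumerate intersections by the minimal ``core'' they contain, which is a genuinely different counting scheme.

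The decisive gap is the one you flag yourself: you never prove that the number of heavy pairs is $O(k)$, and you ``grant'' it. But there are further problems even granting it. First, your core characterization (``$I$ of size $k-1$ is an intersection iff it contains some $B$ or some $(B\setminus\{a\})\cup(B'\setminus\{a'\})$'') is tailored to the top level; the claim that ``the lower levels are treated identically'' is asserted, not argued, and for $|I|<k-1$ the structure $F_i\setminus I$ has several elements and the reduction to a single pair $(a,a')$ breaks down. Second, your claim that ``cores of size $\ge3$ contribute $O(k^2/n^2)\binom{n-1}{k-2}$ each and so are harmless even in great numbers'' and that ``cores of size close to $k-1$ total at most $2^{O(k)}$'' is unsupported without an upper bound on $|\mathcal B|$: the number of pairs $(B,B')$ is governed by $|\mathcal B|$, and the only generic bound available to you (Erd\H{o}s--Rado, Lemma~\ref{lem:2.1}) gives $|\mathcal B^{(\ell)}|\le\ell!\,k^\ell$, which already explodes far past $2^{O(k)}$. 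The paper's Lemma~\ref{lem:2.3} is precisely what tames this; without it, or something playing the same role, the ``great numbers'' of medium-size cores are not harmless. Finally, in the $t=2$ star-of-pairs subcase you claim $|\mathcal I(\mathcal F)|\lesssim\binom{n-1}{k-2}$ without an argument; this case does require work, since $\mathcal B$ can contain both the $\le k$ pairs through $c$ and a swarm of larger sets avoiding $c$. In summary: the approach is plausible in outline and different from the paper's, but the proof has not been closed; you would need to (i) establish a Lemma~\ref{lem:2.3}--type bound on $|\mathcal B^{(\ell)}|$ to control the number of cores, (ii) actually carry out the size-$\ge3$ and lower-level core estimates, and (iii) settle the $t=2$ subcase explicitly.
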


Let us note that it is somewhat surprising that the maximum is attained for $\mathcal A$ and not the full star which is much larger.
Let us present the formula for $|\mathcal I(\mathcal A)|$.

\setcounter{proposition}{4}
\begin{proposition}
\beq
\label{eq:1.3}
|\mathcal I(\mathcal A)| = 3 \sum_{0 \leq i \leq k - 2} {n - 3\choose i} + 3 \sum_{0 \leq i \leq k - 3} {n - 3\choose i} + \sum_{0 \leq i \leq k - 4} {n - 3\choose i}.
\eeq
\end{proposition}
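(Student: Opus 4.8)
The plan is to classify the members of $\mathcal I(\mathcal A)$ by their trace on $\{1,2,3\}$. Given distinct $A, A' \in \mathcal A$, set $I := A \cap A'$, $I_0 := I \cap \{1,2,3\}$ and $I_1 := I \cap \{4,\dots,n\}$. Since each of $A$ and $A'$ omits at most one point of $\{1,2,3\}$, the set $I_0$ omits at most two of them, so $|I_0| \in \{1,2,3\}$; in particular every member of $\mathcal I(\mathcal A)$ meets $\{1,2,3\}$, and the value of $|I_0|$ partitions $\mathcal I(\mathcal A)$ into three disjoint classes. Moreover $I \subsetneq A$ forces $|I| \le k-1$, hence $|I_1| \le k-1-|I_0|$, i.e.\ $|I_1| \le k-2$ when $|I_0|=1$, $|I_1| \le k-3$ when $|I_0|=2$, and $|I_1| \le k-4$ when $|I_0|=3$.

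The only step that requires genuine verification is the converse: every set $I_0 \cup I_1$ with $I_0$ of one of these three shapes and $I_1 \subseteq \{4,\dots,n\}$ of the corresponding admissible size really does occur as $A \cap A'$ for distinct $A, A' \in \mathcal A$. I would supply explicit witnesses, using that $\{4,\dots,n\}\setminus I_1$ is large. If $I_0 = \{a\}$, put $\{b,c\} := \{1,2,3\}\setminus\{a\}$, split $\{4,\dots,n\}\setminus I_1$ into disjoint sets $E, E'$ of size $k-2-|I_1|$ (feasible since $2(k-2-|I_1|) \le n-3-|I_1|$ whenever $n>2k$), and take $A := \{a,b\}\cup I_1 \cup E$, $A' := \{a,c\}\cup I_1 \cup E'$: both lie in $\mathcal A$, they are distinct since $b\in A\setminus A'$, and $A\cap A' = \{a\}\cup I_1$. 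If $|I_0| = 2$, take $A := I_0 \cup I_1 \cup E$ and $A' := I_0 \cup I_1 \cup E'$ with $E, E'$ disjoint subsets of $\{4,\dots,n\}\setminus I_1$ of size $k-2-|I_1| \ge 1$ (using $|I_1| \le k-3$); then $I_0 \subseteq A\cap A'$ places both in $\mathcal A$, $A \ne A'$, and $A\cap A' = I_0\cup I_1$. If $I_0 = \{1,2,3\}$, take $A := \{1,2,3\}\cup I_1 \cup E$, $A' := \{1,2,3\}\cup I_1\cup E'$ with $E, E'$ disjoint of size $k-3-|I_1| \ge 1$ (using $|I_1| \le k-4$). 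In every case the asserted value of $A\cap A'$ is checked directly, and conversely the displayed upper bounds on $|I_1|$ are exactly those forced by $|I|\le k-1$, so the class corresponding to each value of $|I_0|$ is described precisely.

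What remains is bookkeeping. The admissible pairs $(I_0, I_1)$ with $|I_0| = 1$ number $3\sum_{0\le i\le k-2}\binom{n-3}{i}$ (three singletons, and $I_1$ any subset of the $(n-3)$-element set $\{4,\dots,n\}$ of size at most $k-2$); those with $|I_0| = 2$ number $3\sum_{0\le i\le k-3}\binom{n-3}{i}$; and those with $|I_0| = 3$ number $\sum_{0\le i\le k-4}\binom{n-3}{i}$. Since $I\mapsto(I_0,I_1)$ is a bijection from $\mathcal I(\mathcal A)$ onto the disjoint union of these three collections, adding the three counts gives \eqref{eq:1.3}. I anticipate no real obstacle: the argument is elementary, and the only point to keep an eye on is that the disjoint \emph{padding} sets $E, E'$ genuinely fit inside $\{4,\dots,n\}$, which the displayed inequalities guarantee under $n \ge 50k^2$ — indeed already under $n > 2k$.
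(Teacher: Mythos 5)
Your proof is correct and follows essentially the same decomposition as the paper's: classify each $I \in \mathcal I(\mathcal A)$ by its trace $I_0 = I\cap\{1,2,3\}$, observe that the trace on $\{4,\dots,n\}$ can be any set of the appropriate bounded size, and count. You simply group the paper's seven cases (nonempty subsets of $\{1,2,3\}$) into three by $|I_0|$, and spell out the explicit witnesses $A,A'$ that the paper dismisses with ``the remaining six cases can be dealt similarly.''
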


\begin{proof}
Let $A, A' \in \mathcal A$.
Then there are seven possibilities for $A \cap A' \cap \{1,2,3\}$,
namely, all non-empty subsets of $\{1, 2, 3\}$.
If $A \cap A' \cap \{1,2,3\} = \{1\}$ then $A \cap \{1,2,3\}$ and $A' \cap \{1,2,3\}$ are $\{1,2\}$ and $\{1, 3\}$ in some order.
Since $n > 2k$ it is easy to see $A \cap A' = \{1\} \cup D$ is possible for all $D \subset \{4, \ldots, n\}$, $|D| \leq k - 2$.

The remaining six cases can be dealt similarly.
\end{proof}

Note that the RHS of \eqref{eq:1.3} can be simplified to
$$
3 \sum_{0 \leq i \leq k - 2} {n - 2\choose i} + \sum_{0 \leq i \leq k - 4} {n - 3\choose i}.
$$
In comparison
$$
\bigl|\mathcal I(\mathcal S_x)\bigr| = \sum_{0 \leq i \leq k - 2} {n - 1\choose i} = 2\sum_{0 \leq i \leq k - 2} {n - 2\choose i} - {n - 2\choose k - 2}.
$$
That is,
\beq
\label{eq:1.4}
\bigl|\mathcal I(\mathcal S_x)\bigr| < \frac23 \bigl|\mathcal I(\mathcal A)\bigr|.
\eeq
Doing more careful calculations, one can replace $\frac23$ with $\frac{n}{3(n-k)}$.

The paper is organized as follows.
In Section \ref{sec:2} we prove Lemma \ref{lem:1.3} and the main lemma (Lemma \ref{lem:2.3}) which provides some upper bounds concerning $\mathcal B(\mathcal F)$.
In Section \ref{sec:3} we prove Theorem \ref{th:1.4}.
In Section \ref{sec:4} we mention some related problems.

\section{Preliminaries and the main lemma}
\label{sec:2}

\begin{proof}[Proof of Lemma \ref{lem:1.3}]
The fact that $\mathcal B$ is an antichain is obvious.
Suppose for contradiction that $B, B' \in \mathcal B$ but $B \cap B' = \emptyset$.
If $|B| = |B'| = k$ then $B, B'\in \mathcal F$ and $B \cap B'\neq \emptyset$ follows.
By symmetry suppose $|B'| < k$.
Now $\mathcal B \subset \mathcal T(\mathcal F)$ implies that $B'\cap F \neq \emptyset$ for all $F \in \mathcal F$.
Choose a $k$-element superset $F'$ of $B'$ with $B \cap F' = \emptyset$.
Since $B' \in \mathcal T(\mathcal F)$, we have  $F'\in \mathcal T(\mathcal F)$.
By Observation \ref{obs:1.2}, $F' \in \mathcal F$.
However, $B \cap F' = \emptyset$ contradicts $B \in \mathcal T(\mathcal F)$.
This proves (i).
Statement (ii) is immediate from the definition of $\mathcal B$.

To prove (iii) suppose for contradiction that $B_0, B_1, \ldots, B_k \in \mathcal B$ form a sunflower with center $C$.
Since $\mathcal B$ is an antichain, $C$ is a proper subset of $B_0$.
Consequently $C \notin \mathcal T(\mathcal F)$.
Thus we may choose $F \in \mathcal F$ satisfying $C \cap F = \emptyset$.

Consider the $k + 1$ pairwise disjoint sets $B_0 \setminus C, \ldots, B_k \setminus C$.
By $|F| = k$ there is some $i$, $0 \leq i \leq k$ with $F \cap B_i \setminus C = \emptyset$.
However this implies $F \cap B_i = \emptyset$, a contradiction.
\end{proof}

In what follows, $\mathcal B:=\mathcal B(\mathcal F)$ is as in Lemma~\ref{lem:1.3}: the family of minimal transversals of $\mathcal F$.
Let us recall the Erd\H{o}s--Rado Sunflower Lemma.

\begin{lemma}[{\cite{ER}}]
\label{lem:2.1}
Let $\ell \geq 1$ be an integer and $\mathcal D \subset {X \choose \ell}$ a
family which contains no sunflower of size $k + 1$.
Then
\beq
\label{eq:2.1}
|\mathcal D| \leq \ell! \,k^{\ell}.
\eeq
\end{lemma}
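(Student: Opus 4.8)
The plan is to prove this by induction on $\ell$, following the classical argument of Erd\H{o}s and Rado. For the base case $\ell = 1$: any $k+1$ distinct singletons form a sunflower of size $k+1$ with empty center, so a sunflower-free $\mathcal D \subset {X\choose 1}$ has at most $k = 1!\,k^1$ members, and the bound holds.

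For the inductive step, assume the bound holds for all $(\ell-1)$-element families, where $\ell \geq 2$. Given $\mathcal D \subset {X\choose \ell}$ with no sunflower of size $k+1$, first I would pick a \emph{maximal} subfamily $D_1, \ldots, D_t \in \mathcal D$ of pairwise disjoint sets. Since distinct sets of equal size form an antichain, pairwise disjoint members of $\mathcal D$ constitute a sunflower with center $\emptyset$; hence $t \leq k$, for otherwise $D_1, \ldots, D_{k+1}$ would be a forbidden sunflower. Put $Y := D_1 \cup \cdots \cup D_t$, so $|Y| \leq k\ell$. By maximality of the chosen subfamily, every member of $\mathcal D$ meets $Y$, so by averaging some $y \in Y$ lies in at least $|\mathcal D|/|Y| \geq |\mathcal D|/(k\ell)$ members of $\mathcal D$.

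Next I would pass to the link of $y$: set $\mathcal D_y := \bigl\{D \setminus \{y\}\colon D \in \mathcal D,\ y \in D\bigr\} \subset {X\choose \ell-1}$, a family of size at least $|\mathcal D|/(k\ell)$ (the map $D \mapsto D\setminus\{y\}$ is injective on $\{D \in \mathcal D : y \in D\}$). The key observation is that $\mathcal D_y$ contains no sunflower of size $k+1$ either: if $A_1', \ldots, A_{k+1}'$ were such a sunflower with center $C'$, then $A_i := A_i' \cup \{y\}$ would be distinct members of $\mathcal D$ with $A_i \cap A_j = C' \cup \{y\}$ for all $i \neq j$, i.e.\ a sunflower of size $k+1$ in $\mathcal D$, a contradiction. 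By the inductive hypothesis $|\mathcal D_y| \leq (\ell-1)!\,k^{\ell-1}$, and combining this with the averaging bound gives
$$
|\mathcal D| \;\leq\; k\ell \cdot |\mathcal D_y| \;\leq\; k\ell \cdot (\ell-1)!\,k^{\ell-1} \;=\; \ell!\,k^{\ell},
$$
completing the induction.

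I do not anticipate a serious obstacle: the argument is short and self-contained. The only points needing a little care are the base case and the verification that passing to the link of $y$ creates no sunflower of size $k+1$, both of which are routine. One could remark in passing that the factor $\ell!$ is far from optimal, but the crude bound $\ell!\,k^{\ell}$ is all that will be used in the sequel.
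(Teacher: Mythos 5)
Your proof is correct and is essentially the classical Erd\H{o}s--Rado argument: take a maximal pairwise-disjoint subfamily (which is a sunflower with empty center, hence of size at most $k$), observe that its union $Y$ of size at most $k\ell$ is a transversal of $\mathcal D$, average to find a popular $y\in Y$, pass to the link $\mathcal D_y$ (which again avoids sunflowers of size $k+1$), and apply the inductive hypothesis. The one place to be slightly careful is the degenerate case $\mathcal D=\emptyset$, where the maximal disjoint subfamily and the set $Y$ are empty so the averaging step is vacuous; but then the bound is trivially true, so no harm done. Note that the paper does not prove this lemma at all --- it is quoted directly from the Erd\H{o}s--Rado reference \cite{ER} --- so there is no in-paper proof to compare against; your argument is the standard textbook proof of that cited result.
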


The following statement is both well-known and easy.

\begin{lemma}
\label{lem:2.2}
Suppose that $\mathcal E \subset {X\choose 2}$ is intersecting, then either $\mathcal E$ is a star or a triangle.
\end{lemma}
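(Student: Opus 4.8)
The plan is to think of $\mathcal E$ as the edge set of a graph on vertex set $X$, where two edges "intersect" precisely when they share an endpoint; so the hypothesis says $\mathcal E$ is a graph in which any two edges meet. I would first dispose of the trivial cases: if $|\mathcal E| \le 1$, then $\mathcal E$ is (vacuously) a star, so assume $\mathcal E$ contains at least two edges. Two intersecting $2$-sets share exactly one point, so I can write them as $\{a,b\}$ and $\{a,c\}$ with $b \neq c$.

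Next I would split into two cases according to whether every edge of $\mathcal E$ contains $a$. If it does, then $\mathcal E$ is a star with center $a$ and we are done. Otherwise there is an edge $e \in \mathcal E$ with $a \notin e$; since $e$ must meet both $\{a,b\}$ and $\{a,c\}$ and $a \notin e$, we are forced to have $b \in e$ and $c \in e$, hence $e = \{b,c\}$. Thus $\mathcal E$ contains the triangle $\{a,b\},\{a,c\},\{b,c\}$.

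The last step is to show $\mathcal E$ is \emph{exactly} this triangle. Take any further edge $f \in \mathcal E$. If $a \notin f$, the argument just given (now applied with $e$ replaced by $f$) yields $f = \{b,c\}$. If $a \in f$, write $f = \{a,x\}$; since $f$ must meet $\{b,c\}$, we get $x \in \{b,c\}$, so $f \in \{\{a,b\},\{a,c\}\}$. In all cases $f$ is one of the three triangle edges, so $\mathcal E$ is a triangle. I do not anticipate a genuine obstacle here — the only thing to be careful about is the bookkeeping of the small case analysis, in particular making sure the "two edges sharing exactly one vertex" normalization $b \neq c$ is justified (which it is, since distinct members of $\mathcal E$ that intersect must share exactly one of their two elements).
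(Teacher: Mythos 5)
Your proof is correct and complete. The paper actually omits a proof of this lemma, dismissing it as ``well-known and easy,'' so there is no argument in the paper to compare against; your case analysis (two edges sharing a vertex $a$; either all edges pass through $a$, giving a star, or some edge avoids $a$, forcing a triangle and then pinning down every further edge) is the standard and expected argument.
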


We are going to use the standard notation:
for integers $a \leq b$ we set $[a, b] = \{i\colon a \leq i \leq b\}$ and $[n] = [1,n]$. We also write $(x, y)$ instead of $\{x, y\}$  if $x \neq y$.

Based on Lemma \ref{lem:2.1} we could prove \eqref{eq:1.2} for $n > k + 50 k^3$.
To get a quadratic bound we need to improve it under our circumstances.
To state our main lemma we need some more definitions.

Define $t = t(\mathcal B) := \min \bigl\{|B|\colon B \in \mathcal B\bigr\}$.
The \emph{covering number} $\tau(\mathcal B)$ is defined as follows: $\tau(\mathcal B) := \min \bigl\{|T|\colon \,T \cap B \neq \emptyset\ \text{ for all }\ B \in \mathcal B\bigr\}$.
Since $\mathcal F$ is a saturated intersecting family, using (ii) of Lemma~\ref{lem:1.3} we have $\tau(\mathcal B) = t$.

Now we can present our main lemma that is a sharpening of a similar result in \cite{F17}. Put $\mathcal B^{(\le \ell)} := \bigcup_{i=1}^\ell \mathcal B^{(\ell)}$.

\setcounter{lemma}{2}
\begin{lemma}
\label{lem:2.3}
Let $\ell$ be an integer, $k \ge \ell \ge 2$.
Suppose that $\mathcal F \subset {X\choose k}$ is a saturated intersecting family, $\mathcal B = \mathcal B(\mathcal F)$, $t \geq 2$. Assume that $\tau(\mathcal B^{(\le \ell)})\ge 2$.
Then
\beq
\label{eq:2.2}
\bigl|\mathcal B^{(\ell)}\bigr| \leq t \cdot \ell \cdot k^{\ell - 2}.
\eeq
\end{lemma}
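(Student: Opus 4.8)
The plan is to bound $|\mathcal B^{(\ell)}|$ by combining the Sunflower Lemma with the extra hypothesis $\tau(\mathcal B^{(\le\ell)})\ge 2$ and the fact that $\tau(\mathcal B)=t$. First I would fix an optimal transversal $T$ of $\mathcal B$ with $|T|=t$; every $B\in\mathcal B^{(\ell)}$ meets $T$, so $\mathcal B^{(\ell)}=\bigcup_{x\in T}\mathcal B^{(\ell)}_x$ where $\mathcal B^{(\ell)}_x:=\{B\in\mathcal B^{(\ell)}:x\in B\}$. It therefore suffices to show that each \emph{link} $\mathcal B^{(\ell)}_x$, with $x$ deleted, is a family of $(\ell-1)$-sets on $X\setminus\{x\}$ that contains no sunflower of size $k$; then Lemma~\ref{lem:2.1} (applied with parameter $k-1$, or simply $k$, and set size $\ell-1$) gives $|\mathcal B^{(\ell)}_x|\le (\ell-1)!\,k^{\ell-1}$, and summing over $x\in T$ yields a bound of the shape $t\cdot(\ell-1)!\,k^{\ell-1}$. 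This is already quadratic in $n$-free, but the factorial is too big; the real work is to replace $(\ell-1)!\,k^{\ell-1}$ by $\ell\cdot k^{\ell-2}$, i.e.\ to gain roughly a factor $k$ and kill the factorial.

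The mechanism for the improvement is an induction on $\ell$ using the hypothesis $\tau(\mathcal B^{(\le\ell)})\ge 2$, which in particular forces $\tau(\mathcal B^{(\ell-1)}_x)$-type quantities inside each link to be controlled. Concretely: inside the link $\mathcal L_x:=\{B\setminus\{x\}: B\in\mathcal B^{(\ell)}_x\}\subset\binom{X\setminus\{x\}}{\ell-1}$, I want to argue that $\mathcal L_x$ is itself "sunflower-free of size $k$" \emph{and} has small covering number relative to the ambient $\mathcal B$-structure, so that an inductive estimate applies. The cleanest route is probably a direct double counting: by Lemma~\ref{lem:1.3}(iii) $\mathcal B$ has no sunflower of size $k+1$, so $\mathcal L_x$ has no sunflower of size $k$; if $\mathcal L_x$ had $\ge \ell\cdot k^{\ell-2}/t\cdot(\text{something})$ sets one extracts a large sunflower via the sharper counting that in a sunflower-free family of $r$-sets the number of sets through a fixed point of a minimal cover is at most $(k-1)$ times the count for $(r-1)$-sets. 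Iterating this "peel one coordinate, lose a factor $k$" step $\ell-1$ times, starting from the trivial bound $|\mathcal B^{(1)}|\le t$ at the bottom (a $1$-uniform intersecting-type family covered by $t$ points), produces $t\cdot k^{\ell-1}$; the additional gain of a full factor $k$ down to $t\cdot k^{\ell-2}$ and the polynomial prefactor $\ell$ comes from using $\tau(\mathcal B^{(\le\ell)})\ge 2$ at the top level, which says no single point covers all small sets, so one of the $t$ points of the cover actually carries at most a $1/2$ (in fact $1/k$) fraction — this is where the hypothesis is essential.

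The main obstacle I anticipate is making the inductive "peeling" rigorous while keeping the covering-number hypothesis available at each step: when one passes to the link $\mathcal L_x$, it is not a priori the family of minimal transversals of any saturated intersecting family, so Lemma~\ref{lem:1.3} does not directly apply to it, and one must carry the sunflower-freeness and the bound $t$ on covering number as \emph{abstract} hypotheses through the induction. I would set up the induction on an auxiliary statement of the form: "if $\mathcal D\subset\binom{X}{\ell}$ has no sunflower of size $k+1$ and $\tau(\mathcal D)\le t$, then $|\mathcal D|\le f(\ell,k,t)$", prove the recursion $f(\ell)\le t\cdot\big((k-1)\cdot\frac{f(\ell-1)}{t'}\big)$ for the appropriate smaller covering number $t'$ in the links, and check that $f(\ell)=t\cdot\ell\cdot k^{\ell-2}$ satisfies it with the stated boundary behavior. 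The delicate points are: (a) the base cases $\ell=2$ (here Lemma~\ref{lem:2.2} pins down intersecting $2$-graphs as stars or triangles, giving $|\mathcal B^{(2)}|\le t\cdot 1\cdot k^0 = t$ only in the star case, so the triangle case and the $\tau\ge2$ hypothesis must be reconciled — presumably $\tau(\mathcal B^{(\le 2)})\ge 2$ rules out the star and a triangle has just $3\le t\cdot 2\cdot k^0$ sets); and (b) bookkeeping the "$\ell$" prefactor, which I expect arises from summing a telescoping series of the peeled covering numbers rather than from a single clean inequality.
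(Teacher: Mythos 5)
Your proposal takes a genuinely different route from the paper, but it contains a gap that I do not see how to close. The paper does not decompose $\mathcal B^{(\ell)}$ over the points of a minimal transversal and then bound each link; instead it runs a weighted branching process. It starts from a single set $B_1\in\mathcal B$ of size $t$, splits into $|B_1|=t$ one-element sequences of weight $1/t$, and at the second step uses $\tau(\mathcal B^{(\le\ell)})\ge 2$ to choose, for each $(y_1)$, some $B(y_1)\in\mathcal B$ of size $\le\ell$ avoiding $y_1$ — this is exactly where the factor $\ell$ (rather than $k$) enters. All later steps branch on sets of $\mathcal B$ disjoint from the current sequence, so each one costs a factor $\le k$. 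The decisive ingredient is Claim~\ref{cl:2.4}: every $B\in\mathcal B^{(\ell)}$ occurs as $\widehat S$ for some sequence $S$. Its proof uses the \emph{intersecting} property of $\mathcal B$ — whenever a sequence $S$ with $\widehat S\subsetneq B$ is branched on $\widetilde B$, intersecting-ness forces $B\cap\widetilde B\neq\emptyset$, so the sequence can be extended inside $B$. Since the total weight stays $1$ and every sequence of length $\ell$ carries weight $\ge 1/(t\ell k^{\ell-2})$, the bound follows.

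The concrete gap in your plan is that sunflower-freeness of the links $\mathcal L_x$ alone cannot produce the bound $\ell k^{\ell-2}$ per link: the $(\ell-1)!$ factor in the Erd\H os--Rado bound is genuinely present for sunflower-free families, and the links $\mathcal L_x$ are not intersecting (two sets of $\mathcal B^{(\ell)}_x$ meeting only in $x$ give disjoint members of $\mathcal L_x$), so no intersecting-type property survives the peeling. Your proposed recursion $f(\ell)\le t\bigl((k-1)f(\ell-1)/t'\bigr)$ has no justification for what $t'$ should be, and the claim that ``one of the $t$ points carries at most a $1/k$ fraction'' because $\tau(\mathcal B^{(\le\ell)})\ge 2$ is not substantiated — the hypothesis says no single point covers all the small $B$'s, but it does not control the distribution of $\mathcal B^{(\ell)}$ over the transversal points. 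What the hypothesis really buys, in the paper's argument, is that the \emph{second} branching step can be taken over a set of size at most $\ell$, replacing one factor $k$ by $\ell$; that is a statement about the whole family $\mathcal B$, not about any single link. Without a mechanism that exploits the intersecting property of $\mathcal B$ across levels (as Claim~\ref{cl:2.4} does), the peeling will not get below $t(\ell-1)!k^{\ell-1}$.
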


\begin{proof}
For the proof we use a branching process.
We need some notation.
During the proof a \emph{sequence} is an ordered sequence of distinct elements of~$X$: $(x_1, x_2, \ldots, x_s)$.
Sequences are denoted by $S, S_1$ etc. and we let $\widehat S$ denote the underlying unordered set: $\widehat S = \{x_1, x_2, \ldots, x_s\}$.

To start the branching process, we fix a set $B_1 \in \mathcal B$ with $|B_1| = t(\mathcal B)$ and for each element $y_1 \in B_1$ we assign weight $t(\mathcal B)^{-1}$ to the sequence $(y_1)$.

At the first stage, we replace each 1-sequence $(y_1)$ with at most $\ell$ 2-sequences: using $\tau \bigl(\mathcal B^{(\le \ell)}\bigr) \geq 2$ we choose an arbitrary $B(y_1) \in \mathcal B$ satisfying $y_1 \notin B(y_1)$, $|B(y_1)| \le \ell$, and assign weight $\left(t(\mathcal B)\cdot |B(y_1)|\right)^{-1} \ge \left(t(\mathcal B)\cdot \ell \right)^{-1}$ to each sequence $(y_1, y_2)$, $y_2 \in B(y_1)$.
Note that the total weight assigned is exactly~$1$.

At each subsequent stage we pick a sequence $S = (x_1, \ldots, x_p)$ with weight~$w(S)$ such that there exists $B \in \mathcal B$ satisfying $\widehat S \cap B = \emptyset$. Then we replace $S$ by the $|B|$ sequences $(x_1, \ldots, x_p, y)$, $y \in B$, and assign weight $\frac{w(S)}{|B|}$ to each of them.

We continue until $\widehat S \cap B \neq \emptyset$ holds for all sequences $S$ and all $B \in \mathcal B$.
Since $X$ is finite, this eventually happens.
Importantly, the total weight assigned is still~$1$.

\setcounter{claim}{3}
\begin{claim}
\label{cl:2.4}
For each $B \in \mathcal B^{(\ell)}$ there is some sequence $S$ with $\widehat S = B$.
\end{claim}

\begin{proof}
Let us suppose the contrary. Since $\mathcal B$ is intersecting, a sequence with $\widehat S = B$ is not getting replaced by a longer sequence during the process. Let $S = (x_1, \ldots, x_p)$ be a sequence of maximal length that occurred at some stage of the branching process satisfying $\widehat S \subsetneqq B$.
Since $\widehat S$ is a proper subset of $B$, $\widehat S \cap B' = \emptyset$ for some $B'\in \mathcal B$.
Thus at some point we picked $S$ and chose some $\widetilde B \in \mathcal B$ disjoint to it.
Since $\mathcal B$ is intersecting, $B \cap \widetilde B \neq \emptyset$.
Consequently, for each $y \in B \cap \widetilde B$ the sequence $(x_1, x_2, \ldots, x_p, y)$ occurred in the branching process.
This contradicts the maximality of $p$.
\end{proof}

Let us check the weight assigned to $S$ with $|\widehat S| = \ell$.
It is at least $1\bigm/ \bigl(t(\mathcal B)\ell k^{\ell - 2}\bigr)$.
Since the total weight is $1$, \eqref{eq:2.2} follows.
\end{proof}

We should remark that the same $B \in \mathcal B^{(\ell)}$ might occur as $\widehat S$ for several sequences $S$ and for many sequences
$|\widehat S| \neq \ell$ might hold.
This shows that there might be room for considerable improvement.

Let us mention that if $\mathcal F \subset {X \choose k}$ is a saturated intersecting family with $\tau(\mathcal F) = k$ then $\mathcal B(\mathcal F) = \mathcal F$ and \eqref{eq:2.2} reduces to $|\mathcal F| \leq k^k$, an important classical result of Erd\H{o}s and Lov\'asz \cite{EL}.

\section{The proof of Theorem \ref{th:1.4}}
\label{sec:3}

Since the case $k = 2$ trivially follows from Lemma~\ref{lem:2.2}, we assume $k \geq 3$. Take any saturated intersecting $\mathcal F$ and let $\mathcal B = \mathcal B(\mathcal F)$.
First recall that for the full star $\mathcal S_x$,
\beq
\label{eq:3.1}
\bigl|\mathcal I(\mathcal S_x)\bigr| = \sum_{0 \leq \ell \leq k - 2} {n - 1\choose \ell},\eeq
 which is less than $|\mathcal I(\mathcal A)|$, and so we
may assume that $\mathcal F$ is not the full star, i.e., $\mathcal B^{(1)} = \emptyset$.

Recall that $t = \min \{|B|\colon B \in \mathcal B\}$.
Let us first present two simple inequalities for sums of binomial coefficients that we need in the sequel.
$$
{n - a\choose i} \Bigm/{n - a\choose i - 1} = \frac{n - a - i + 1}{i} \geq \frac{n - k}{k - 1}
$$
$\text{ for } k > i > 0, \ a \geq 0 \text{ and }
i + a \leq k + 1$.
Thus for every $1 \leq s \leq k - 1$
\beq
\label{eq:3.1uj}
\sum_{0 \leq i \leq s} {n - 2\choose i} \geq \frac{n - k}{k - 1} \sum_{0 \leq i \leq s - 1} {n - 2\choose i}.
\eeq
$$
{n - 2\choose i} \Bigm/ {n \choose i} = \frac{(n - i)(n - i - 1)}{n(n - 1)} > \left(1 - \frac{k}{n}\right)^2 \ \text{ for  } \ 1 < i < k.
$$
Thus
\beq
\label{eq:3.2}
\sum_{0 \leq i \leq s} {n - 2\choose i} \geq \left(1 - \frac{k}{n}\right)^2 \sum_{0 \leq i \leq s} {n \choose i} \ \text{ for } \ 1 < s < k.
\eeq
Let us partition $\mathcal F$ into $\mathcal F^{(t)} \cup \ldots \cup \mathcal F^{(k)}$ where $F \in \mathcal F^{(\ell)}$ if $\ell = \max \{ |B|\colon \, B \in \mathcal B, B \subset F\}$.

Set $\mathcal I_\ell = \left\{ F \cap F'\colon F \in \mathcal F^{(\ell)}, F' \in \mathcal F^{(t)} \cup \ldots \cup \mathcal F^{(\ell)}\right\}$.
In human language, if $F \in \mathcal F^{(\ell)}$, $F' \in \mathcal F^{(\ell')}$ then we put
$F \cap F'$ into $\mathcal I_\ell$ iff $\ell'\leq \ell$.
It should be clear that
$$
|\mathcal I(\mathcal F)| \leq \sum_{t \leq \ell \leq k} |\mathcal I_\ell|.
$$
The point is that for $F \in \mathcal F^{(\ell)}$ and $B \subset F$, $B \in \mathcal B^{(\ell)}$, for an arbitrary $F'\in \mathcal F$, $F \cap F'$ is partitioned as
$$
F \cap F' = (B \cap F') \cup \bigl((F\setminus B) \cap F'\bigr).
$$
Here there are at most $2^\ell - 1$ possibilities for $B \cap F'$ and $(F \setminus B) \cap F'$ is a subset of $X$ of size at most $k - \ell$.
This proves

\begin{lemma}
\label{lem:3.1}
For any $t \leq \ell \leq k$ such that $\tau(\mathcal B^{(\le \ell)})\ge 2$ we have
\beq
\label{eq:3.3}
\bigl|\mathcal I_\ell\bigr| \leq (2^\ell - 1) \bigl|\mathcal B^{(\ell)}\bigr| \sum_{0 \leq i \leq k - \ell} {n\choose i}
< 2^\ell \cdot \ell^2 k^{\ell - 2} \sum_{0 \leq i \leq k - \ell} {n \choose i} =: f(n, k, \ell).
\eeq
\end{lemma}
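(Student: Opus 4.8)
The plan is to show that every member of $\mathcal I_\ell$ is determined by a bounded amount of data, and then count. For each $F \in \mathcal F^{(\ell)}$ I fix once and for all a set $B(F) \in \mathcal B^{(\ell)}$ with $B(F) \subseteq F$; such a set exists because, by the definition of $\mathcal F^{(\ell)}$, the quantity $\max\{|B|\colon B \in \mathcal B,\, B \subseteq F\}$ equals $\ell$. Given $I \in \mathcal I_\ell$, I pick any representation $I = F \cap F'$ with $F \in \mathcal F^{(\ell)}$ and $F' \in \mathcal F^{(t)} \cup \cdots \cup \mathcal F^{(\ell)}$, set $B = B(F)$, and use the partition
$$
I \;=\; F \cap F' \;=\; (B \cap F') \,\cup\, \bigl((F \setminus B) \cap F'\bigr),
$$
a disjoint union since $B \cap (F \setminus B) = \emptyset$.

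Next I count the possibilities for each piece. Since $B \in \mathcal B \subseteq \mathcal T(\mathcal F)$ is a transversal of $\mathcal F$ and $F' \in \mathcal F$, the first block $B \cap F'$ is a \emph{nonempty} subset of the $\ell$-set $B$, so once $B$ is known there are at most $2^\ell - 1$ choices for it. The second block $(F \setminus B) \cap F'$ is a subset of $X$ of size at most $|F \setminus B| = k - \ell$, so there are at most $\sum_{0 \le i \le k - \ell}{n \choose i}$ choices for it. Finally $B$ itself ranges over $\mathcal B^{(\ell)}$, so there are $|\mathcal B^{(\ell)}|$ choices. Since $I$ is recovered from the triple $\bigl(B,\, B \cap F',\, (F \setminus B) \cap F'\bigr)$, this yields
$$
|\mathcal I_\ell| \;\le\; |\mathcal B^{(\ell)}|\,(2^\ell - 1)\sum_{0 \le i \le k - \ell}{n \choose i},
$$
which is the first inequality of \eqref{eq:3.3}.

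For the second inequality I would apply Lemma \ref{lem:2.3}. Its hypotheses are met here: $\mathcal F$ is a saturated intersecting family in ${X \choose k}$, $t \ge 2$ because we have already reduced to the case $\mathcal B^{(1)} = \emptyset$, $2 \le t \le \ell \le k$, and $\tau(\mathcal B^{(\le \ell)}) \ge 2$ is exactly the standing hypothesis of Lemma \ref{lem:3.1}. Hence $|\mathcal B^{(\ell)}| \le t \cdot \ell \cdot k^{\ell - 2} \le \ell^2 k^{\ell - 2}$, and combining this with $2^\ell - 1 < 2^\ell$ gives $|\mathcal I_\ell| < 2^\ell \ell^2 k^{\ell - 2} \sum_{0 \le i \le k - \ell}{n \choose i} = f(n,k,\ell)$.

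There is no genuine obstacle in this argument; the only points requiring a little care are that the representation $I = F \cap F'$ is not unique (so one fixes one arbitrarily for each $I$ and checks that the triple really does recover $I$, which it does as it is simply the two-block partition above), and the routine verification that the reductions made at the start of Section \ref{sec:3} put us in a position to invoke Lemma \ref{lem:2.3}. The conceptual content is entirely contained in the decomposition ``intersection $=$ (trace on a small minimal transversal) $\cup$ (leftover set of size $\le k - \ell$)'', which is what converts the bound on $|\mathcal B^{(\ell)}|$ coming from the branching argument into a bound on $|\mathcal I_\ell|$.
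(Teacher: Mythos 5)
Your proof is correct and follows essentially the same route as the paper: the paper also obtains the first inequality from the two-block decomposition $F\cap F' = (B\cap F') \cup ((F\setminus B)\cap F')$ with $B\in\mathcal B^{(\ell)}$, $B\subset F$, noting that $B\cap F'$ is a nonempty subset of $B$ and $(F\setminus B)\cap F'$ has size at most $k-\ell$, and the second inequality from Lemma \ref{lem:2.3} together with $t\le\ell$. The only cosmetic difference is that you make the choice of $B(F)$ and the disjointness of the two blocks explicit, which the paper leaves implicit.
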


Note that if $\tau(\mathcal B^{(2)}) = 2$ then $\mathcal F$ coincides with $\mathcal A$ and we have nothing to prove. Let $\alpha$ be the smallest integer such that $\tau(\mathcal B^{(\le \alpha)})\ge 2$. We have $\alpha\ge 3$. 
The family $\mathcal F':=\bigcup_{i=1}^{\alpha-1}\mathcal F^{(i)}$ is a trivial intersecting family, and thus
\beq\label{eq:3.333}
\left|\bigcup_{i=1}^{\alpha-1}\mathcal I_{i}\right| \le |\mathcal I(\mathcal S_x)|.
\eeq
On the other hand, using \eqref{eq:3.1uj} it is clear that for $\ell\ge 2$
$$
f(n, k, \ell) \bigm/ f(n, k, \ell + 1) > 
\frac{(n - k)\ell^2}{2k^2(\ell+1)^2} \geq
6 \ \ \text{ for } \ n \geq 50 k^2.
$$
Hence
\beq
\label{eq:3.4}
\sum_{\ell = \alpha}^{k} \bigl|\mathcal I_\ell\bigr| < \sum_{3 \leq \ell \leq k} f(n, k, \ell) < \frac{6}{5} f(n, k, 3).
\eeq

Summing the right hand sides of \eqref{eq:3.333} and \eqref{eq:3.4}, we get that 
\begin{align*}
|\mathcal I(\mathcal F)|\le &\,
\sum_{0 \leq i \leq k - 2} {n - 1\choose i} + \frac{432k}{5} \sum_{0\le i\le k-3}{n\choose i} \\
<&\,
{n-2\choose k-2} + \left( \frac{432k}{5} + 2 \right)\sum_{0\le i\le k-3}{n\choose i} \\
\overset{\eqref{eq:3.1uj}}{\le}&\, 
{n-2\choose k-2} + 90\frac{k(k-1)}{n-k}\sum_{0\le i\le k-2}{n\choose i}\le
{n-2\choose k-2}+1.8\sum_{0\le i\le k-2}{n\choose i} \\
\overset{\eqref{eq:3.2}}{<} &\, {n-2\choose k-2}+2\sum_{0\le i\le k-2}{n-2\choose i} < 
|\mathcal I(\mathcal A)|.
\qed \popQED
\end{align*}

\section{Concluding remarks}
\label{sec:4}

Let $n_0(k)$ be the smallest integer such that Theorem \ref{th:1.4} is true for $n \geq n_0(k)$.
We proved $n_0(k) \leq 50 k^2$.
One can improve on the constant $50$ by being more careful in the analysis.
The following example shows that $n_0(k) \ge (3 - \varepsilon)k$.

For $1 \le p \le k$, $n > 2k$, define the family $\mathcal B_p(n, k)$ by
$$
\mathcal B_p(n, k) := \left\{ A\in\binom{[n]}{k}\colon |A\cap [2p - 1]| \ge p \right\}.
$$
Note that $\mathcal S_1 = \mathcal B_1(n, k)$ and $\mathcal A = \mathcal B_2(n, k)$. It is easy to verify that
\begin{align*}
|\mathcal I(\mathcal B_p(n, k))| = &\,
\sum_{i = 1}^{p-1} \binom{2p - 1}{i} \sum_{j=0}^{k-p}\binom{n - 2p + 1}{j} \\
+ &\,
\sum_{i = p}^{2p-1} \binom{2p - 1}{i} \sum_{j=0}^{ k-i-1}\binom{n - 2p + 1}{j}.
\end{align*}
By doing some calculations, one can see that $|\mathcal I(\mathcal B_3(n, k))| > |\mathcal I(\mathcal B_2(n, k))|$ for $n < (3 - \varepsilon) k$.
It would be interesting do decide, whether for $n > (1 + \varepsilon)k$ the maximum is always attained on one of the families $\mathcal B_p(n, k)$.

Note that for $n = 2k$, $k \ge 14$, it is possible to construct an intersecting family $\mathcal F$ with $|\mathcal I(\mathcal F)| = \sum_{i=0}^{k - 1}\binom{n}{i}$ using an argument from \cite{FKKP}.
We say that a family $\mathcal F$ \emph{almost shatters} a set $X\subset[n]$ if for any $A\subset X$, $A\notin\{\emptyset, X\}$, there is $F\in\mathcal F$ such that $F\cap X = A$. Take a random intersecting family $\mathcal F$ by picking a $k$-set from each pair $(A, [n]\setminus A)$ independently at random. In \cite[Theorem 7]{FKKP} it is proved, that with positive probability $\mathcal F$ almost shatters every $X\in\binom{[2k]}{k}$. Fix such a family $\mathcal F$; then, by applying the almost shattering property two times, it is easy to show that, for each $I\subset[n]$, $1\le|I|<k$, there are two sets $F_1, F_2\in\mathcal F$, such that $I\subset F_1$ and $F_1 \cap F_2 = I$.

\medskip

Another natural problem is to consider $\widetilde{\mathcal I}(\mathcal F) = \{F \cap F'\colon F, F' \in \mathcal F\} = \mathcal I(\mathcal F) \cup \mathcal F$. Essentially the same proof shows that $\widetilde{\mathcal I}(\mathcal F)$ is maximised by $\mathcal F = \mathcal S_x$ for $n \ge 50 k^2$ and one can verify that $|\mathcal I(\mathcal B_2(n, k))| > |\mathcal I(\mathcal S_x)|$ for $n < (5 - \varepsilon) k$.

\medskip

In \cite{F20} the analogous problem for the number of distinct differences $F \setminus F'$ was considered.
Improving those results in \cite{FKK}, we proved that for $n > 50k \cdot \log k$ the maximum is attained for the full star, $\mathcal S_x$.
We showed also that it is no longer true for $n = ck$, $2 \leq c < 4$, $k > k_0(c)$.

The methods used in \cite{FKK} are completely different.

\frenchspacing

\end{document}